\newenvironment{named}[1]
  {\def\namedthmname{#1}%
   \refstepcounter{namedthm}%
   \namedthm\def\@currentlabel{#1}}
  {\endnamedthm}
     \newcommand{\BR}{{\mathbb {R}}}
     \newcommand{\BZ}{{\mathbb {Z}}}
    \newcommand{\Hom}{{\mathrm{Hom}}}
     \newcommand{\Ext}{{\mathrm{Ext}}}
\def\-{^{-1}}
\newcommand{\delete}[1]{}
    \theoremstyle{plain}
\newtheorem{thm}{Theorem}[section]
\newtheorem{defn}[thm]{Definition} 
\newtheorem{lem}[thm]{Lemma}
\newtheorem{prop}[thm]{Proposition}
\newtheorem{cor}[thm]{Corollary}
\newtheorem*{namedthm}{\namedthmname}
\newcounter{namedthm}
\newtheorem*{thm*}{Theorem}
\newtheorem*{rem*}{Remark}
\theoremstyle{definition}
\newtheorem{rem}[thm]{Remark}
\newtheorem{example}[thm]{Example}
    \numberwithin{equation}{section}
\def\Proof{\noindent{\bf Proof}\quad}
\def\qed{\hfill$\square$\smallskip}
\begin{document}

\title{On the homotopy  of closed manifolds and finite CW-complexes}


\author{Yang Su}
\address{HLM, Academy of Mathematics and Systems Science, Chinese Academy of Sciences, Beijing 100190, China \\
School of Mathematical Sciences, University of Chinese Academy of Sciences, Beijing 100049, China
}
\email{suyang@math.ac.cn}


\author{Xiaolei Wu}
\address{University of Bonn, Mathematical Institute, Endenicher Allee 60, 53115 Bonn, Germany}
\email{xwu@math.uni-bonn.de}

\subjclass[2010]{55Q99, 13D07, 55U30}

\date{September, 2019}

\keywords{homotopy group, Poincar\'e duality group, aspherical manifold, finite CW-complex, group cohomology}

\begin{abstract}
We study the finite generation of homotopy groups of closed manifolds and finite CW-complexes by relating it to the cohomology of their fundamental groups. Our main theorems are as follows: when $X$ is a finite CW-complex of dimension $n$ and $\pi_1(X)$ is  virtually a Poincar\'e duality group of dimension $\geq n-1$, then $\pi_i(X)$ is not finitely generated for some $i$ unless $X$ is homotopy equivalent to the Eilenberg--MacLane space $K(\pi_1(X),1)$; when $M$ is an $n$-dimensional closed manifold and $\pi_1(M)$ is virtually a Poincar\'e duality group of dimension $\ge n-1$,  then for some $i\leq [n/2]$, $\pi_i(M)$ is not finitely generated, unless $M$ itself is an aspherical manifold.  These generalize  theorems of M. Damian from polycyclic groups to any virtually Poincar\'e duality groups. When $\pi_1(X)$  is not a virtually Poincar\'e duality group, we also obtained similar results. As a by-product we showed that if a group $G$ is of  type F and  $H^i(G,\mathbb{Z} G)$ is finitely generated for any $i$, then $G$ is a  Poincar\'e duality group. This recovers partially a theorem of Farrell.
\end{abstract}

\maketitle
\section*{Introduction}
The homotopy groups $\pi_{i}(X)$ are important algebraic topological invariants associated to a space $X$. The study of their properties is a major topic in topology. The first concern is whether these abelian groups are finitely generated. Let $X$ be a finite connected CW-complex of dimension $n$. A celebrated theorem of Serre \cite{Se53} says that when $X$ is simply connected, then all the homotopy groups of $X$ are finitely generated. So it is a natural question to consider the case when $X$ is not simply connected. When $\pi_1(X)$ is finite, one can pass to the universal cover of $X$ which is still a finite CW-complex, hence Serre's theorem applies. When $\pi_1(X)$ is not finite, M. Damian did some interesting  work on this problem \cite{Da05,Da09}.  One of his main results \cite[Theorem 1.2]{Da09} says that when $X$ is a finite CW-complex of dimension $n$ and $\pi_1(X)$ is a polycyclic group of Hirsch length $\geq n-1$, then $\pi_i(X)$ is not finitely generated for some $i \ge 2$ or $X$ is aspherical. Recall that a topological space is aspherical if its universal cover is contractible.

In this paper, we relate the (non)-finite generation of homotopy groups  of closed manifolds and finite complexes to the cohomology of their fundamental groups. The main results extend Damian's theorems \cite[Theorem 1.2, 1.3, 1.4]{Da09} to a much broader class of groups. For example, our theorems apply to the case when the fundamental group is virtually a Poincar\'e duality group (see Definition \ref{defn-pdg}), in particular it holds for the fundamental group of any closed aspherical manifold.

It is well-known that any finitely presented group  can be realized as the fundamental group of a closed manifold of dimension $\ge 4$. The following theorem considers the case when the fundamental group is of type F (see Definition \ref{defn-type}).

\begin{named}{Theorem A}\label{mthm-pdg-mfd}
Let $M^n$ be a closed $n$-dimensional manifold with
$\pi_1(M)=G$, suppose $G$ is of type F, then

\begin{enumerate}
\item if $G$ is not a Poincar\'e duality group, then $\pi_{i}(M)$ is not finitely generated  for some $i \ge 2$ . Furthermore, if $H^{i}(G,\mathbb ZG)$ is finitely generated for all $i \le [n/2]$, then $\pi_{i}(M)$ is not finitely generated for some $2 \le i \le [n/2]$ ;

\item if $G$ is a duality group of dimension $d$ such that $d \ge n-1$,
then  either $\pi_i(M)$ is not finitely generated for some $2 \le i\leq [n/2]$, or  $M$ itself is aspherical.
\end{enumerate}
\end{named}

\begin{rem}
Note that the conclusions of the theorem still hold when the assumption is virtually satisfied, i.~e.~ if some finite index subgroup of $\pi_1(M)$ satisfies the assumption. In particular, this applies to polycyclic groups since they are virtually Poincar\'e duality groups (\cite[Theorem 2]{Hi52} and \cite{AJ76}).
\end{rem}

\begin{rem}
If $G$ is a Poincar\'e duality group of dimension $d$ then there exists a closed manifold $M$ with $\pi_{1}(M)=G$ and $\pi_{i}(M)$ finitely generated for all $i \ge 2$ (see \ref{thm-existence} below). From \ref{mthm-pdg-mfd} (b) we see that either $M$ is aspherical with fundamental group $G$, or $\dim M \ge  d+2$. In the first case the Borel conjecture predicts that $M$ is topologically rigid. It would be interesting to have a structure theorem for the manifolds in the second case. There are several works in this direction, including the classical fibration theorem of Browder-Levine \cite{BL} saying that if $G = \mathbb Z$ then $M$ is always a fiber bundle over $S^{1}$ with simply-connected fiber, and the analysis of the toplogical rigidity of some classes of these manifolds by Kreck and L\"uck \cite{KL06}.
\end{rem}

\begin{rem}
Note that by a result of C. Stark \footnote{We learned C. Stark¡¯s results \cite[Theorem 4.1]{St96} and \cite[Corollary 2]{St95} (Compare Remark \ref{rem-quesI}) after this work was completed.} \cite[Theorem 4.1]{St96} (see also Proposition \ref{prop:fgh-finf}), if the homotopy groups of a finite CW complex $X$ are all finitely generated, then $\pi_1(X)$ is of type $F_{\infty}$.
\end{rem}

In general, when the fundamental group is not necessarily of type $F$, we have the following.

\begin{named}{Theorem B}\label{mthm-gel-mfd}
Let $M^n$ be a closed $n$-dimensional manifold with
$\pi_1(M)=G$.

\begin{enumerate}

\item Suppose that $H^{i}(G,\BZ G) = 0$ for all $i\leq d$. If $n\leq d$, then for some $i$, $\pi_i(M)$ is not finitely generated. If $n =d+1$ or $d+2$, then either $\pi_i(M)$ is not finitely generated for some $i$, or $M$ is aspherical and  $G$ is a Poincar\'e duality group of dimension $n$.
\item if $H^{i}(G, \mathbb ZG)$ is not finitely generated for some $i$, then $\pi_j(M)$ is not finitely generated for some $j$.
\end{enumerate}
\end{named}

\begin{example}\label{ex}
The Thompson group $F$ is a finitely presented group of infinite cohomological dimension, but with the property that $H^{i}(F,\BZ F) = 0$ for all $i$ \cite[Theorem 7.2]{BG84}. Now part (a) of   \ref{mthm-gel-mfd} implies that any closed manifold with fundamental group $F$ can not have finitely generated homotopy group in each dimension.
\end{example}

Similar to \ref{mthm-pdg-mfd}, in the finite CW-complex case,  we have the following(compare \cite[Theorem 1.2]{Da09}).


\begin{named}{Theorem C}\label{mthm-pdg-cw}
Suppose that $X$ is a finite  CW-complex of dimension $n$ and that $\pi_1(X)$ is a virtually Poincar\'e duality group of dimension $d$. Then
\begin{enumerate}
    \item \label{casea} If $d > n$, then $\pi_i(X)$ is not finitely generated for some $i \ge 2$.
    \item \label{caseb} If $d= n$ or $n-1$, then $\pi_i(X)$ is not finitely generated for some $i \ge 2$ unless $X$ is a $K(\pi_1(X),1)$ space.
    \end{enumerate}
In particular, when $\pi_1(X)$ has torsion and $d \ge n-1$, the conclusion of (a) holds.
\end{named}

In general, if $\pi_1(X)$ is not a virtually Poincar\'e duality group, we have the following theorem for finite CW-complexes.

\begin{named}{Theorem D}\label{mthm-gel-cw}
Let $G$ be a group and $n$ be the smallest integer such that $H^{n+1}(G,\BZ G)$ is not finitely generated and $X$ is a  finite CW-complex  of dimension $\leq n$ with fundamental group $G$. Then for some $i\geq 2$, $\pi_i(X)$ is not  finitely generated.

\end{named}

Both \ref{mthm-pdg-cw} and \ref{mthm-gel-cw} are proved by transforming the problem into the manifold case. In general, it is an interesting problem to realize a given group as the fundamental group of manifolds satisfying certain topological conditions, such as knot complements \cite{Ke65} or homology spheres \cite{Ke69}. From this point of view  \ref{mthm-gel-mfd}  leads to the following question

\begin{named}{Question I}\label{ques}
Given a finitely presented group $G$ of finite cohomological dimension, such that $H^{i}(G, \mathbb ZG)$ is finitely generated for all $i$. Then does there exist a closed manifold $M$ with $\pi_{1}(M)=G$ and $\pi_{i}(M)$ finitely generated for all $i \ge 2$?
\end{named}

\begin{rem}\label{rem-quesI}
Note that Example \ref{ex} of the Thompson group shows that the condition that $G$ has finite cohomological dimension is necessary. On the other hand, by \cite[Corollary 2]{St95},  the question is the same as asking whether $G$ is a Poincar\'e duality group.
\end{rem}

We have an affirmative answer to this question in the case when $G$ is of type F.
\begin{named}{Theorem E}\label{thm-existence}
Let $G$ be a group of type F, with $H^{i}(G, \mathbb ZG)$ finitely generated for all $i$. Then
\begin{enumerate}
\item there exists a closed manifold $M$ with $\pi_{1}(M) =G$ and $\pi_{i}(M)$  finitely generated for all $i \ge 2$;
\item $G$ is a  Poincar\'e duality group.
\end{enumerate}
\end{named}

The second part of \ref{thm-existence} recovers  partially a theorem of Farrell \cite[Theorem 3]{Fa75}, see Remark \ref{rmk: farrell}.

\begin{rem}
It is an open question whether a finitely presented Poincar\'e duality group is always the fundamental group of a closed aspherical manifold, see \cite{Da00} and \cite{Lu08} for more information.
\end{rem}

\textbf{Acknowledgements.} Su would like to thank the Max-Planck Institute for Mathematics at Bonn for a research visit in August 2018. Su was partially supported by NSFC 11571343.  Wu was partially supported by Prof.~Wolfgang L\"uck's ERC Advanced Grant ``KL2MG-interactions'' (no.  662400) and the DFG Grant under Germany's Excellence Strategy - GZ 2047/1, Projekt-ID 390685813. The authors would like to thank Ross Geoghegan for some helpful communications.

\section{Basic definitions and results}
In this section we collect some basic definitions and results that we may need later. For more details see \cite{Br82} and \cite{Kl99}.

\begin{defn}\label{defn-fd}
A CW-complex $X$ is called \textbf{finitely dominated} if there is a finite CW-complex $K$ and two maps $i:X \rightarrow K$, $r:K\rightarrow X$ such that $r\circ i$ is homotopic to $id_{X}$.
\end{defn}

\begin{defn}\label{defn-pdc}
A finitely dominated  CW-complex $X$ is called a \textbf{Poincar\'e duality space} of dimension $n$ if there is a $\BZ \pi_1(X)$-module structure on $\BZ$ and $e\in H_n(X,\BZ)$ such that the cap-product $e \cap -$
$$ H^i(X,A)\rightarrow H_{n-i}(X,\BZ \otimes A)$$
induces isomorphisms for all $i$ and all $\BZ \pi_1(X)$-modules $A$.
\end{defn}

\begin{defn}\label{defn-type}
A group $G$ is called of \textbf{type F} if it has a finite CW-complex model for the Eilenberg--MacLane space $K(G,1)$. A group $G$ is called of \textbf{type $F_{\infty}$} if it has a  CW-complex model for the Eilenberg--MacLane space $K(G,1)$ with finitely many cells in each dimension. A group $G$ is called of \textbf{type FP} if the trivial $\mathbb Z G$-module $\mathbb Z$ has a projective resolution of finite type over $\mathbb Z G$.
\end{defn}

\begin{defn}\label{defn-pdg}
A group $G$ is called a \textbf{Poincar\'e duality group} of dimension $n$ if $K(G,1)$ is a Poincar\'e duality space of dimension $n$. A group  $G$ of type FP is called a \textbf{duality group} if there is an integer $n$ such that $H^{i}(G, \mathbb Z G)=0$ for all $i \ne n$ and $H^{n}(G, \mathbb Z G)$ is a torsion-free abelian group.
\end{defn}

Note that a Poincar\'e duality group is a duality group with $H^{n}(G, \mathbb Z G) \cong \mathbb Z$  \cite[Section VIII.10]{Br82}.

\begin{lem}\label{fd-fg-htp}
A simply connected finite dimensional CW-complex  $X$ is finitely dominated if and only if $X$ is homotopy equivalent to a finite CW-complex, if and only if $\pi_i(X)$ is finitely generated for all $i$.
\end{lem}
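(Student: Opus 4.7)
The plan is to establish the two equivalences (ii)$\Leftrightarrow$(iii) and (i)$\Leftrightarrow$(ii) separately. The implication (ii)$\Rightarrow$(iii) is a direct application of Serre's finiteness theorem quoted in the introduction, together with the homotopy invariance of $\pi_{i}$. The implication (ii)$\Rightarrow$(i) is tautological, since a finite CW-complex dominates itself via the identity map.

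For the key direction (iii)$\Rightarrow$(ii), I would first upgrade the hypothesis from homotopy to homology. Since $X$ is simply connected and each $\pi_{i}(X)$ is finitely generated, Serre's mod-$\mathcal{C}$ Hurewicz theorem, applied with $\mathcal{C}$ the Serre class of finitely generated abelian groups, gives inductively that every $H_{i}(X;\BZ)$ is finitely generated; the finite-dimensionality hypothesis also forces $H_{i}(X;\BZ)=0$ for $i>n$. I would then build a finite CW-complex $Y$ with a map $f\colon Y\to X$ inducing an isomorphism on integral homology by the classical inductive cell-attachment procedure: at each stage $i$, attach finitely many $i$-cells to realise the generators of $H_{i}(X)$ and to kill the kernel on $H_{i-1}$ introduced at the previous stage, both of which are finitely generated. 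The process terminates because the homology of $X$ vanishes above dimension $n$. Whitehead's theorem, applicable in the simply connected setting, then upgrades $f$ to a genuine homotopy equivalence.

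For the remaining direction (i)$\Rightarrow$(ii) I would invoke Wall's finiteness obstruction theorem: a finitely dominated CW-complex $X$ is homotopy equivalent to a finite CW-complex if and only if its Wall finiteness obstruction $w(X)\in\widetilde K_{0}(\BZ[\pi_{1}(X)])$ vanishes. Under the simply connected hypothesis one has $\pi_{1}(X)=1$ and $\widetilde K_{0}(\BZ)=0$, so the obstruction is automatically trivial. This appeal to Wall is the main technical input of the entire proof; the simply connected hypothesis is precisely what neutralises it, reducing the question to the vanishing of $\widetilde K_{0}(\BZ)$.
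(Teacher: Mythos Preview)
Your proof is correct and follows essentially the same approach as the paper: Wall's finiteness theorem (with $\widetilde K_0(\BZ)=0$) handles (i)$\Leftrightarrow$(ii), and Serre's mod-$\mathcal C$ theory handles the passage between finitely generated homotopy and finitely generated homology. For (iii)$\Rightarrow$(ii) the paper simply cites Hatcher's minimal-cell-structure result \cite[Proposition~4C.1]{Ha02} together with the finite-dimensionality of $X$, whereas you sketch that construction by inductive cell attachment and then invoke Whitehead---the same argument at a slightly different level of detail.
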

\Proof The first ``if and only if" follows from Wall's finiteness theorem as $X$ is simply connected. Now if $X$ is homotopy equivalent to a finite CW-complex, then by Serre's mod $\mathscr C$ theory (\cite{Se53} or \cite[Chapter 9 Section 6]{Sp66}), we have $\pi_i(X)$ is finitely generated for all $i$. For the other direction, if $\pi_i(X)$ is finitely generated for all $i$, again by Serre's Theorem, we have $H_i(X)$ is finitely generated for all $i$. In this case $X$ has a minimal cell structure consisting  of finitely many cells in each dimension \cite[Proposition 4C.1]{Ha02}. Since $X$ is finite dimensional, it is homotopy equivalent to a finite CW-complex.
\qed

Combining Lemma \ref{fd-fg-htp} with  \cite[Theorem 4.1]{St96}, we have the following.

\begin{prop}\label{prop:fgh-finf}
Let $X$ be a finite CW-complex and $\tilde{X}$ be its universal cover. Suppose $\pi_i(\tilde{X})$ is finitely generated for all $i$, then $\pi_1(X)$ is a group of type $F_{\infty}$.
\end{prop}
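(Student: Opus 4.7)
The plan is essentially to splice Lemma \ref{fd-fg-htp} with Stark's theorem, which are the two tools already cited immediately before the proposition. First I would note that since $X$ is a finite CW-complex, the universal cover $\tilde X$ inherits an equivariant CW structure whose cells lie over those of $X$; in particular $\tilde X$ has the same (finite) dimension as $X$ and is simply connected. The hypothesis that $\pi_i(\tilde X)$ is finitely generated for every $i$ therefore places $\tilde X$ squarely in the scope of Lemma \ref{fd-fg-htp}, so $\tilde X$ is homotopy equivalent to a finite CW-complex, hence in particular finitely dominated.

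With this upgrade in hand, the second step is to invoke \cite[Theorem 4.1]{St96}, which says that if a finite CW-complex has universal cover that is finitely dominated (equivalently, homotopy equivalent to a finite complex), then its fundamental group is of type $F_\infty$. Applied to $X$, this yields the desired conclusion that $\pi_1(X)$ is of type $F_\infty$.

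I do not foresee a genuine obstacle: the substantive content of the proposition is carried by Stark's theorem together with the simply connected homotopy-finiteness criterion recorded in Lemma \ref{fd-fg-htp}. The only points to verify are the mild hypotheses on the universal cover, namely finite dimensionality and simple connectivity, both of which are automatic from $X$ being a finite CW-complex. The proof is therefore a direct concatenation of two already-established results rather than a new argument.
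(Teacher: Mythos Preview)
Your outline follows exactly the paper's route, but you have slightly overstated what \cite[Theorem 4.1]{St96} gives you. In the paper's reading, Stark's theorem concludes only that $\pi_1(X)$ is of type $FP_\infty$, not $F_\infty$. The paper then closes the gap in two short steps: first pass from $FP_\infty$ to $FL_\infty$ via \cite[Proposition VIII.4.3]{Br82}, and then use that $\pi_1(X)$ is finitely presented (automatic since $X$ is a finite CW-complex) together with \cite[Theorem VIII.7.2]{Br82} to upgrade to $F_\infty$. These steps are routine, but without them your second paragraph asserts a conclusion that Stark's theorem, as cited, does not literally supply. Once you insert this bridge, your argument coincides with the paper's.
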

\Proof
By Lemma \ref{fd-fg-htp} we have the universal over of $X$ is homotopy equivalent to a finite CW-complex. By \cite[Theorem 4.1]{St96}, $\pi_1(X)$ is of type $FP_{\infty}$. By \cite[Proposition VIII.4.3]{Br82}, we have $\pi_1(X)$ is of type $FL_{\infty}$. But since $X$ is a finite CW-complex, $\pi_1(X)$ is finitely presented. Hence by \cite[Theorem VIII.7.2]{Br82}, $\pi_1(X)$ is of type $F_{\infty}$.
\qed

\section{A technical theorem} \label{section:tech-thm}

In this section, we prove a technical theorem,   which provides the algebraic ingredients for the proof of our main results.  We first need  a sequence of lemmas for calculating the $\Ext$ functor.

\begin{lem} \label{free-ext-cal}
Let $G$ be a group such that $H^i(G,\BZ G)$ is finitely generated for a given $i$, $A $ be a  $\BZ G$-module whose underlying abelian group is finitely generated free. Then $\Ext^i_G(A,\BZ G)$ is also finitely generated.
\end{lem}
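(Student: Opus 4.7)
The plan is to produce a natural isomorphism
\[
\Ext^i_G(A, \BZ G) \;\cong\; H^i(G, \BZ G)^{n}
\]
where $n = \rank_\BZ A$; the desired finite generation then follows immediately from the hypothesis on $H^i(G,\BZ G)$.

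The main tool is the standard diagonal twist: for any $\BZ G$-module $B$, the map $\BZ G \otimes_\BZ B \to \BZ G \otimes_\BZ B_{0}$, $g \otimes b \mapsto g \otimes g^{-1} b$ (where $B_{0}$ denotes $B$ with the trivial $G$-action), is a $\BZ G$-module isomorphism carrying the diagonal $G$-action on the left to the action on the $\BZ G$-factor alone on the right. Consequently, if $P$ is a free (resp.\ projective) $\BZ G$-module and $B$ is $\BZ$-free of finite rank, then $P \otimes_\BZ B$ with the diagonal action is a free (resp.\ projective) $\BZ G$-module. Taking a projective resolution $P_\bullet \to \BZ$ of the trivial module and using that $A$ is $\BZ$-flat, $P_\bullet \otimes_\BZ A \to A$ is therefore a projective resolution of $A$ over $\BZ G$, with diagonal actions throughout.

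Next, the tensor--Hom adjunction
\[
\Hom_G(P \otimes_\BZ A,\, \BZ G) \;\cong\; \Hom_G\bigl(P,\, \Hom_\BZ(A, \BZ G)\bigr),
\]
where $G$ acts on $\Hom_\BZ(A, \BZ G)$ by $(g\phi)(a) = g\,\phi(g^{-1}a)$, gives
\[
\Ext^i_G(A, \BZ G) \;\cong\; H^i\bigl(G,\, \Hom_\BZ(A, \BZ G)\bigr).
\]
The natural evaluation map $A^{\ast} \otimes_\BZ \BZ G \to \Hom_\BZ(A, \BZ G)$ is a $\BZ G$-module isomorphism carrying the diagonal $G$-action to the action above, and since $A^{\ast} = \Hom_\BZ(A, \BZ)$ is $\BZ$-free of rank $n$, a second application of the twist identifies $\Hom_\BZ(A, \BZ G)$ with the free $\BZ G$-module $(\BZ G)^{n}$. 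Substituting yields $H^i(G, \Hom_\BZ(A, \BZ G)) \cong H^i(G, \BZ G)^{n}$, which is finitely generated by hypothesis.

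No single step is a serious obstacle; the argument is essentially routine homological bookkeeping once one has the twist isomorphism and the standard Hom--tensor adjunction. The only point requiring genuine care is verifying that the various $G$-actions match up correctly under the twist, under the adjunction, and under the identification $\Hom_\BZ(A, \BZ G) \cong A^{\ast} \otimes_\BZ \BZ G$; once these compatibilities are in place, the proof is immediate.
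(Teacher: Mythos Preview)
Your proof is correct and follows essentially the same route as the paper: both establish the isomorphism $\Ext^i_G(A,\BZ G)\cong H^i(G,\Hom_\BZ(A,\BZ G))$, then use the untwisting isomorphism $\Hom_\BZ(A,\BZ G)\cong \Hom_\BZ(A,\BZ)_0\otimes \BZ G\cong(\BZ G)^n$ to reduce to $H^i(G,\BZ G)^n$. The only difference is that you spell out the projective-resolution/adjunction argument and the twist explicitly, whereas the paper cites \cite[III.2.2 and III.5.7]{Br82} for these facts.
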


\Proof
Note first that when $A = \BZ$  and $G$ acts  trivially, $\Ext^i_G(A,\BZ G) \cong H^i(G,\BZ G)$ which is finitely generated by assumption.

In general we have $\Ext^{i}_G (A,\BZ G) \cong H^{i}(G,\Hom(A,\BZ G))$ (c.~f.~ \cite[III.2.2, p.61]{Br82}), where $G$ acts diagonally on $\Hom(A,\mathbb Z G)$. There are the following $\BZ G$-module isomorphisms
 $$\Hom (A,\BZ G) \cong \Hom(A,\BZ) \otimes \BZ G \cong \Hom(A, \BZ)_0 \otimes \BZ G \cong (\BZ G)^{r}$$
where $\Hom(A,\BZ)_0$ is the trivial $\BZ G$-module and the second ismorphism is by \cite[III.5.7, p.69]{Br82}, and $r$ is the rank of $A$ as a free abelian group. Therefore
$$\Ext^i_G(A,\BZ G) \cong H^{i}(G, (\BZ G)^{r}) \cong H^{i}(G, \mathbb Z G)^{r}$$
and the lemma now follows.
\qed

\begin{lem}\label{fin-ext-cal-gel}
Let $G$ be a group and $i$ be an integer such that $H^i(G,\BZ G)$ and $H^{i-1}(G,\BZ G)$ are both finitely generated. Let $A $ be a $\BZ G$-module whose underlying abelian group is finite. Then $\Ext^i_G(A,\BZ G)$ is also finitely generated.
\end{lem}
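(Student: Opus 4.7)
The plan is to reduce this lemma to the free abelian case already treated in Lemma \ref{free-ext-cal} by resolving $A$ by $\BZ G$-modules whose underlying abelian groups are finitely generated and free.

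The key construction is the following. Let $F = \BZ[A]$ denote the free abelian group on the underlying (finite) set of $A$. Since $G$ acts on the set $A$ via the $\BZ G$-module structure, this action extends by $\BZ$-linearity to a $\BZ G$-module structure on $F$, and the augmentation map $\epsilon \colon F \to A$, $[a] \mapsto a$, is a surjective $\BZ G$-module homomorphism. By construction $F$ is free abelian of rank $|A|$; the kernel $K := \ker(\epsilon)$ is a subgroup of $F$, hence finitely generated free as an abelian group. Thus we obtain a short exact sequence of $\BZ G$-modules
\[
0 \to K \to F \to A \to 0
\]
in which both $K$ and $F$ have finitely generated free underlying abelian group.

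Applying $\Ext^{\ast}_G(-,\BZ G)$ yields a long exact sequence containing the segment
\[
\Ext^{i-1}_G(K,\BZ G) \to \Ext^{i}_G(A,\BZ G) \to \Ext^{i}_G(F,\BZ G).
\]
By Lemma \ref{free-ext-cal}, the hypothesis that $H^{i-1}(G,\BZ G)$ and $H^{i}(G,\BZ G)$ are finitely generated forces the outer two terms to be finitely generated. Since $\Ext^{i}_G(A,\BZ G)$ is then an extension of a subgroup of a finitely generated abelian group by a quotient of one, it is itself finitely generated, which completes the proof.

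There is no real obstacle here once the short exact sequence above is in hand; the only point requiring a moment of care is to make sure both $F$ and $K$ are simultaneously finitely generated free as abelian groups and $G$-stable, which the choice $F = \BZ[A]$ ensures because $A$ is finite and $G$ permutes its elements.
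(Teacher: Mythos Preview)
Your proof is correct, and it takes a genuinely different route from the paper's.

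The paper first treats the case where $G$ acts trivially on $A$: it reduces to $A=\BZ/k$ and uses the resolution $0\to\BZ\to\BZ\to\BZ/k\to 0$ together with Lemma~\ref{free-ext-cal}. For general $A$ it then passes to a finite-index subgroup $H\le G$ acting trivially on $A$ (possible since $\Aut(A)$ is finite) and invokes the Eckmann--Shapiro lemma to identify $\Ext^i_H(A,\BZ H)\cong\Ext^i_G(A,\BZ G)$.

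Your argument sidesteps both the reduction to trivial action and the Shapiro lemma by choosing the permutation module $F=\BZ[A]$ as the first step of a resolution. Because $G$ already permutes the finite set $A$, both $F$ and $K=\ker(F\to A)$ are $\BZ G$-modules with finitely generated free underlying abelian group, so Lemma~\ref{free-ext-cal} applies directly to each, and the long exact sequence finishes the job. This is more elementary and handles the general $A$ in one stroke; the paper's approach, by contrast, isolates the passage from trivial to nontrivial action as a separate step (and yields Lemma~\ref{fi-hmg-fc} as a byproduct), which is useful elsewhere in the paper.
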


\Proof We first show the lemma when $G$ acts on $A$ trivially. For that, we only need to prove the case when $A$ is a finite cyclic group. Suppose $A \cong \BZ/k$, we have a short exact sequence (of trivial $G$-modules)
$$ 0 \rightarrow \BZ \stackrel{ k}{\rightarrow} \BZ \rightarrow \BZ/k \rightarrow 0$$
Apply the functor $\Hom_{G}(-,\BZ G)$, we get a long exact sequence
$$ \cdots \rightarrow \Ext^i_G(\BZ/k,\BZ G)\rightarrow \Ext^i_G(\BZ,\BZ G) \rightarrow  \Ext^i_G(\BZ,\BZ G) \rightarrow  \Ext^{i+1}_G(\BZ/k,\BZ G) \cdots$$
By the assumption $\Ext^i_G(\BZ,\BZ G)$ is finitely generated for  $i$ and $i-1$. Thus $\Ext^i_G(\BZ/k,\BZ G)$ is also finitely generated.

Now we deal with the general case. Since $A$ is a finite group, its automorphism group is also finite. Thus, we can choose a finite index subgroup $H$ of $G$ such that $H$ acts trivially on $A$.  Note that since $H$ is a finite index subgroup of $G$, $\Hom_H(\mathbb Z G, \mathbb Z H) \cong \mathbb Z G \otimes_{\BZ H} \mathbb Z H \cong \mathbb Z G$ (c.~f.~\cite[III.5.9]{Br82}). By Eckmann-Shapiro Lemma (c.~f.\cite[Corollary 2.8.4]{Ben}), we have
$$ \Ext^i_H(A,\BZ H) \cong \Ext_G^i(A,\Hom_H(\BZ G,\BZ H))   \cong  \Ext_G^i(A,\mathbb Z G)$$
therefore $\Ext_G^i(A, \BZ G)$ is finitely generated.
\qed

Note that the arguments in the proof also show the following lemma, since by definition $H^i(G,\BZ G) = \Ext_G^i(\BZ,\BZ G)$ where $\BZ$ is a constant $\BZ G$-module.

\begin{lem}\label{fi-hmg-fc}
Let $H$ be a finite index subgroup in $G$, then for any given $i$,  $H^i(G,\BZ G)$ is finitely generated  if and only if $H^i(H,\BZ H)$ is finitely generated.
\end{lem}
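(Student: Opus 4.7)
The plan is to derive the lemma directly from the Eckmann--Shapiro isomorphism, in exactly the same manner as the last paragraph of the proof of Lemma \ref{fin-ext-cal-gel}, specialized to the trivial $\BZ G$-module $A = \BZ$. Since $H^{i}(G, \BZ G) = \Ext^{i}_{G}(\BZ, \BZ G)$ by definition, the lemma reduces to comparing these Ext groups over $G$ and over $H$.

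First I would invoke the identification $\Hom_{H}(\BZ G, \BZ H) \cong \BZ G$ of $\BZ G$-modules, which holds precisely because $[G : H] < \infty$ (cf.\ \cite[III.5.9]{Br82}); this is the step where the finite-index hypothesis enters. For an infinite-index subgroup the $\Hom$ would be the coinduced module, which in general differs from the induced module $\BZ G$. With this identification in hand, Eckmann--Shapiro (cf.\ \cite[Corollary 2.8.4]{Ben}) gives
$$H^{i}(H, \BZ H) = \Ext^{i}_{H}(\BZ, \BZ H) \cong \Ext^{i}_{G}\bigl(\BZ, \Hom_{H}(\BZ G, \BZ H)\bigr) \cong \Ext^{i}_{G}(\BZ, \BZ G) = H^{i}(G, \BZ G),$$
so the two groups are isomorphic as abelian groups.

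The conclusion is then immediate: finite generation of an abelian group is preserved under isomorphism, so $H^{i}(G, \BZ G)$ is finitely generated if and only if $H^{i}(H, \BZ H)$ is. There is essentially no obstacle; the statement is really just the special case $A = \BZ$ of the computation already carried out at the end of the proof of Lemma \ref{fin-ext-cal-gel}, isolated as a standalone lemma because it will be needed later when passing between a group and a finite-index subgroup (e.g.\ when reducing virtual Poincar\'e duality statements to genuine Poincar\'e duality statements). The only minor point worth verifying is that the isomorphism above does transport the abelian-group structure relevant to the finite-generation condition, which is automatic from the naturality of Eckmann--Shapiro.
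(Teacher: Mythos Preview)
Your proposal is correct and matches the paper's own justification exactly: the paper states that the lemma follows from the arguments in the proof of Lemma~\ref{fin-ext-cal-gel} specialized to the trivial module $A=\BZ$, using the Eckmann--Shapiro isomorphism together with $\Hom_{H}(\BZ G,\BZ H)\cong \BZ G$ for finite-index $H$. There is nothing to add.
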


\begin{prop} \label{fg-ext-cal}
Let $G$ be a group and $i$ be an integer such that $H^i(G,\BZ G)$ and $H^{i-1}(G,\BZ G)$ are both finitely generated, $A$ be a $\BZ G$-module whose underlying abelian group is finitely generated. Then $\Ext^i_G(A,\BZ G)$ is also a finitely generated abelian group.
\end{prop}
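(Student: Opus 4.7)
The plan is to reduce to the two cases already handled by Lemmas \ref{free-ext-cal} and \ref{fin-ext-cal-gel} via the standard torsion/free splitting of a finitely generated abelian group, then patch them together using the long exact Ext sequence.

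First I would observe that the torsion subgroup $T \subset A$ is characteristic (it is preserved by any abelian group endomorphism, in particular by the $G$-action on $A$), so $T$ is a $\BZ G$-submodule of $A$. Writing $F := A/T$, the quotient is a finitely generated torsion-free abelian group, hence free of finite rank, and we obtain a short exact sequence of $\BZ G$-modules
$$0 \to T \to A \to F \to 0.$$
Here $T$ is a finite $\BZ G$-module and $F$ is free of finite rank as an abelian group, so both fit precisely into the hypotheses of the previous lemmas.

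Next I would apply the contravariant functor $\Hom_G(-,\BZ G)$ to this short exact sequence to obtain the long exact sequence
$$\cdots \to \Ext^i_G(F,\BZ G) \to \Ext^i_G(A,\BZ G) \to \Ext^i_G(T,\BZ G) \to \cdots.$$
By Lemma \ref{free-ext-cal}, the hypothesis that $H^i(G,\BZ G)$ is finitely generated implies $\Ext^i_G(F,\BZ G)$ is finitely generated. By Lemma \ref{fin-ext-cal-gel}, the hypotheses that both $H^i(G,\BZ G)$ and $H^{i-1}(G,\BZ G)$ are finitely generated imply $\Ext^i_G(T,\BZ G)$ is finitely generated. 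Thus $\Ext^i_G(A,\BZ G)$ sits in an extension
$$0 \to Q \to \Ext^i_G(A,\BZ G) \to K \to 0$$
where $Q$ is a quotient of $\Ext^i_G(F,\BZ G)$ and $K$ is a subgroup of $\Ext^i_G(T,\BZ G)$. Both $Q$ and $K$ are finitely generated abelian groups, and an extension of finitely generated abelian groups is finitely generated, so the conclusion follows.

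I do not expect any serious obstacle here; the only point worth checking carefully is the $G$-equivariance of the torsion filtration, which is automatic. Everything else reduces to the two lemmas already proved and a short diagram chase in the long exact Ext sequence.
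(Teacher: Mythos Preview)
Your proof is correct and follows essentially the same approach as the paper: split off the torsion subgroup $T$ as a $\BZ G$-submodule, form the short exact sequence $0 \to T \to A \to A/T \to 0$, and apply the long exact $\Ext$ sequence together with Lemmas~\ref{free-ext-cal} and~\ref{fin-ext-cal-gel}. Your write-up is in fact a bit more detailed than the paper's, making explicit why $T$ is $G$-invariant and spelling out the extension argument.
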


\Proof
Since the torsion subgroup $T$ of $A$ is a $\BZ G$ submodule of $A$, we have the follow short exact sequence
$$ 0 \rightarrow T \rightarrow A \rightarrow A/T \rightarrow 0$$
Now as abelian groups, $T$ is finite, $A/T$ is finitely generated free. Apply the functor $ \Hom_G(-,\BZ G)$, we get a long exact sequence, and the proposition follows now from Lemma \ref{free-ext-cal} and Lemma \ref{fin-ext-cal-gel}.

\qed

Note that the proof of Proposition \ref{fg-ext-cal} also shows the following which will be useful later to prove part (a) of \ref{mthm-gel-mfd}.

\begin{prop}\label{ext-vanish}
Let $G$ be a group such that $H^{i}(G,\BZ G) = 0$ for any $i\leq n$ and $A$ is a finitely generated abelian group with a $\BZ G$-module structure.  Then $\Ext^i_G(A,\BZ G) = 0$ for any $i\leq n$.
\end{prop}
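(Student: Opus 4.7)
The plan is to run the same argument as in Proposition \ref{fg-ext-cal}, but upgrade every ``finitely generated'' conclusion along the way to the sharper ``vanishing'' conclusion, since the hypothesis $H^i(G,\BZ G)=0$ for all $i\leq n$ is strictly stronger than finite generation. Since the proof is structurally identical, the key point is to verify that each long exact sequence actually produces a vanishing rather than just finite generation on the middle term.

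Concretely, let $T \subseteq A$ be the torsion subgroup, so there is a short exact sequence $0 \to T \to A \to A/T \to 0$ of $\BZ G$-modules. Applying $\Hom_G(-,\BZ G)$ yields a long exact sequence in $\Ext^{*}_G(-,\BZ G)$, so it suffices to prove $\Ext^i_G(A/T,\BZ G) = 0$ and $\Ext^i_G(T,\BZ G) = 0$ for all $i\leq n$. For $A/T$, the chain of isomorphisms inside the proof of Lemma \ref{free-ext-cal} gives $\Ext^i_G(A/T,\BZ G) \cong H^i(G,\BZ G)^r$ where $r$ is the rank of $A/T$; this vanishes for $i\leq n$ by hypothesis.

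For $T$ I would mirror the proof of Lemma \ref{fin-ext-cal-gel}. First, $\Aut(T)$ is finite since $T$ is a finite abelian group, so there is a finite-index subgroup $H\leq G$ acting trivially on $T$. The Eckmann--Shapiro argument used in that lemma gives $\Ext^i_G(T,\BZ G) \cong \Ext^i_H(T,\BZ H)$, and with the module $\BZ$ in place of $T$ the same identification yields $H^i(H,\BZ H) \cong H^i(G,\BZ G)$, so the vanishing hypothesis transfers to $H$. With trivial $H$-action, decompose $T$ as a direct sum of cyclic groups $\BZ/k_j$ (as $\BZ H$-modules); for each summand the long exact sequence arising from $0 \to \BZ \xrightarrow{k_j} \BZ \to \BZ/k_j \to 0$ sandwiches $\Ext^i_H(\BZ/k_j,\BZ H)$ between $H^{i-1}(H,\BZ H)$ and $H^i(H,\BZ H)$, both of which are zero for $i\leq n$.

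There is really no serious obstacle beyond bookkeeping with indices: because the hypothesis gives vanishing across the whole range $i\leq n$, the adjacent-degree term $H^{i-1}(H,\BZ H)$ appearing in the long exact sequences is automatically covered (since $i-1\leq n-1\leq n$), with no boundary issue. This is precisely why the vanishing hypothesis propagates more cleanly than the finite-generation hypothesis in Proposition \ref{fg-ext-cal}, and the proof should be a direct, almost line-by-line, adaptation of the arguments already in place.
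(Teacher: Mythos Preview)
Your argument is correct and is exactly the approach the paper intends: the paper's ``proof'' of Proposition~\ref{ext-vanish} is simply the sentence that the proof of Proposition~\ref{fg-ext-cal} (and hence of Lemmas~\ref{free-ext-cal} and~\ref{fin-ext-cal-gel}) already establishes the vanishing statement under the stronger hypothesis. You have spelled out precisely this, with the correct handling of the index shift $i-1$ and the Eckmann--Shapiro transfer of the vanishing hypothesis to the finite-index subgroup.
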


We are now ready to show the following.

\begin{thm} \label{thm-htp-fin}
Let $M$ be a closed manifold of dimension $n$ with fundamental group $G$, and $H^i(G,\BZ G)$ is finitely generated for all $i\leq [n/2]$. If for $2 \le i\leq [n/2]$, $\pi_i(M)$ is finitely generated, then $\pi_i(M)$ is finitely generated for all $i$.
\end{thm}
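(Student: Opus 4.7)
The plan is to work on the universal cover $\widetilde M$, a simply connected (hence orientable) $n$-manifold, and translate the problem into a statement about $H_*(\widetilde M;\BZ)$ via Serre's mod-$\mathscr C$ theory. First I would apply Lemma~\ref{fd-fg-htp}: the hypothesis that $\pi_i(M)=\pi_i(\widetilde M)$ is finitely generated for $2\leq i\leq[n/2]$ forces $H_i(\widetilde M;\BZ)$ to be finitely generated for $i\leq[n/2]$, while conversely, once $H_i(\widetilde M;\BZ)$ is known to be finitely generated for all $i$, the same lemma produces finite generation of $\pi_i(M)$ in every degree $\geq 2$ (the group $\pi_1(M)=G$ is already finitely presented because $M$ is a closed manifold). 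Hence it suffices to show $H_i(\widetilde M;\BZ)$ is finitely generated for $i>[n/2]$.

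Next I would invoke Poincar\'e duality on the oriented manifold $\widetilde M$. Triangulating $M$ as a finite CW complex, $C_*(\widetilde M)$ becomes a bounded complex of finitely generated free $\BZ G$-modules, and one has the standard identification $\Hom_{\BZ G}(C_*(\widetilde M),\BZ G)\cong C^*_c(\widetilde M;\BZ)$. Poincar\'e duality then yields
\[
H_i(\widetilde M;\BZ)\;\cong\; H^{\,n-i}_c(\widetilde M;\BZ)\;\cong\; H^{\,n-i}\bigl(\Hom_{\BZ G}(C_*(\widetilde M),\BZ G)\bigr),
\]
so the remaining task is to prove the right-hand side is a finitely generated abelian group whenever $n-i\leq[n/2]$.

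For this I would use the universal-coefficient spectral sequence for a bounded complex of projective $\BZ G$-modules,
\[
E_2^{p,q}=\Ext^{p}_{\BZ G}\bigl(H_q(\widetilde M;\BZ),\,\BZ G\bigr)\;\Longrightarrow\; H^{p+q}\bigl(\Hom_{\BZ G}(C_*(\widetilde M),\BZ G)\bigr).
\]
Fix $j\leq[n/2]$. Every lattice point $(p,q)$ on the diagonal $p+q=j$ satisfies $p,q\leq[n/2]$, so $H_q(\widetilde M;\BZ)$ is finitely generated as an abelian group by the first step; the hypothesis that $H^i(G,\BZ G)$ is finitely generated for $i\leq[n/2]$ then allows Proposition~\ref{fg-ext-cal} to give $E_2^{p,q}$ finitely generated. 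Each $E_\infty^{p,q}$ is a subquotient of $E_2^{p,q}$ and $H^j$ is built from these by finitely many extensions, so $H^j(\Hom_{\BZ G}(C_*(\widetilde M),\BZ G))$ is finitely generated. Combined with Poincar\'e duality this gives $H_i(\widetilde M;\BZ)$ finitely generated for $i\geq\lceil n/2\rceil$, which together with the first step covers every $i$.

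The main obstacle I anticipate is ensuring that the entire $E_2$-diagonal $p+q\leq[n/2]$ lies inside the range in which Proposition~\ref{fg-ext-cal} applies and checking the comparison $C^*_c(\widetilde M;\BZ)\cong\Hom_{\BZ G}(C_*(\widetilde M),\BZ G)$ rigorously, so that Poincar\'e duality translates into the language of the spectral sequence. Over the generally non-Noetherian ring $\BZ G$ there is no classical short-exact-sequence universal coefficient theorem, so the spectral sequence (which is valid for any bounded complex of projectives) is the essential tool, and the cohomological finiteness of $G$ in the hypothesis is exactly what makes its $E_2$-page finitely generated along the diagonal we care about.
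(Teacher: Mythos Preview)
Your proposal is correct and takes essentially the same approach as the paper: Serre's mod-$\mathscr C$ theory to reduce to the homology of $\widetilde M$, Poincar\'e duality to pass to cohomology in degrees $\leq[n/2]$, and the universal-coefficient spectral sequence combined with Proposition~\ref{fg-ext-cal} to control the $E_2$-terms. The only superficial difference is that the paper first passes to an orientable double cover of $M$ and applies Poincar\'e duality there with $\BZ G$-coefficients, whereas you apply it directly on the (automatically orientable) universal cover and identify $H^*_c(\widetilde M;\BZ)$ with $H^*(\Hom_{\BZ G}(C_*(\widetilde M),\BZ G))$.
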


\begin{proof}  We may  assume $M$ is orientable by passing to an index two cover. In doing so the finite generation of $H^i(G,\BZ G)$ is not affected by Lemma \ref{fi-hmg-fc}. By Serre's theorem the universal cover $\widetilde{M}$ has finitely generated $\pi_i(M) \cong \pi_i(\widetilde{M})$  for all $i \ge 2$ if and only if $H_i(\widetilde{M},\BZ)$ is finitely generated for all $i \ge 2$ \cite[Chapter 9 Section 6 Theorem 21]{Sp66}. Let $\Lambda=\mathbb Z \pi_{1}(M)$ be the group ring, fix a CW-structure on $M$ (see for example \cite[Corollary A.12]{Ha02}), then $H_{i}( \widetilde M;\mathbb Z)$ is isomorphic to the cellular homology $H_{i}(M, \Lambda)$. Since $M$ is  closed and orientable, by Poincar\'e duality, we have $H_i(M, \Lambda) \cong  H^{n-i}(M, \Lambda )$. So we only need to show for $i \leq  [n/2]$, $H^{i}(M,\Lambda)$ is finitely generated.

We have a Universal Coefficient Spectral Sequence   (see \cite[Chapter I, Theorem 5.5.1]{Go58} or \cite[Theorem (2.3)]{Le77}), which converges to $H^{i}(M,\Lambda)$,  with $E_{2}$-terms
$$E_2^{p,q} = \Ext_\Lambda^{q}(H_p(\widetilde{M}),\Lambda) $$
Since we know already that $\pi_p(M)$  is finitely generated for $2 \le p\leq [n/2]$, by Serre's theorem $H_p(\widetilde{M})$ as an abelian group is finitely generated for $p\leq [n/2]$. Now since $H^i(G,\BZ G)$ is finitely generated for any $i\leq [n/2]$, by Proposition \ref{fg-ext-cal},  each term in the $E_{2}$-page of the spectral sequence is also finitely generated as long as $p\leq [n/2]$ and $q\leq [n/2]$. Thus for $i\leq [n/2]$, the limit group $H^{i}(M,\Lambda)$ is finitely generated.
\end{proof}

\begin{rem}
Note that Poincar\'e duality groups satisfy the condition  that $H^i(G,\BZ G)$ is finitely generated for all $i$; duality groups also satisfy the condition in our theorem if the dimension of the group is $> [n/2]$ (see Definition \ref{defn-pdg}).
\end{rem}

\begin{rem}
When $G$ does not satisfy the condition that $H^i(G,\BZ G)$ is finitely generated for any $i\leq [n/2]$, then Theorem \ref{thm-htp-fin} is not true in general. For example, take $M$ to be the connected sum of two copies of $S^1\times S^d$ for some $d\geq 3$.  Then $\pi_i(M) = 0$ for $2\leq i\leq d-1 $ but $\pi_d(M)$ is a free abelian group of infinite rank.
\end{rem}

\section{The manifold case}
In this section we prove our main theorems in the manifolds case. There are two main ingredients in the proof. The algebraic one we have already presented in Section \ref{section:tech-thm}. The geometric one is the following theorem on fibration of Poincar\'e duality spaces,  first announced by Quinn \cite[Remark 1.6]{Qu72}, for a proof see Gottlieb \cite{Go79} or Klein \cite[Corollary F]{Kl01}.

\begin{thm}\label{pd-comlex-fib}
Let $F\rightarrow E\rightarrow B$ be a fibration such that $F,E,B$ are finitely dominated CW-complexes. Then $E$ is a Poincar\'e duality space if and only if $F$ and $B$ are Poincar\'e duality spaces. When $E$ is a Poincar\'e duality space of dimension $n$, the sum of the duality dimensions of $F$ and $B$ is also $n$.
\end{thm}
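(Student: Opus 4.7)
The plan is to prove both directions using the Serre spectral sequence with local coefficients, together with the characterization of Poincaré duality as the statement that cap product with a fundamental class induces an isomorphism from cohomology to homology with arbitrary twisted coefficients. Throughout, write $f = \dim F$, $b = \dim B$, and let $w_F, w_B, w_E$ denote the respective orientation characters.

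For the direction ``$F, B$ are Poincaré duality spaces implies $E$ is one'': first observe that $E$ is finitely dominated, which follows from standard facts about fibrations of finitely dominated CW-complexes (finite domination is preserved by such pullbacks, via Mather--Pedersen-type arguments). The fiberwise fundamental class $[F]$ provides a generator of the local system $\mathcal{H}_f(F; \BZ_{w_F})$ on $B$, and combining it with $[B] \in H_b(B; \BZ_{w_B})$ via the edge of the Serre spectral sequence produces a candidate fundamental class $[E] \in H_{f+b}(E; \BZ_{w_E})$, where $w_E$ is determined by $w_F$, $w_B$, and the monodromy action of $\pi_1(B)$ on $H_f(F)$. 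To verify that cap product with $[E]$ is an isomorphism, one pairs two Serre spectral sequences (cohomology and homology of $E$) by cap product with $[E]$; on the $E_2$-page this reduces to the composition of cap product with $[B]$ (with coefficients in the local system $\mathcal{H}^\ast(F)$) and fiberwise cap product with $[F]$, both of which are isomorphisms by hypothesis. The comparison theorem for spectral sequences then upgrades this to an isomorphism on abutments.

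For the reverse direction, assume $E$ is a Poincaré duality space of dimension $n$. The vanishing pattern in the Serre spectral sequence, combined with $H^i(E; \Lambda) = 0$ for $i > n$ and nontriviality in degree $n$, immediately forces bounded cohomological dimensions for $F$ and $B$ with $f+b = n$. To extract full duality for $F$, restrict $E$ to a small disk neighborhood in $B$ and use that this restriction is Poincaré-equivalent to $F \times D^b$, so that Poincaré duality for $E$ restricted fiberwise implies duality for $F$; this is the content of Gottlieb's approach. Duality for $B$ then follows by a dual argument: once $F$ is known to satisfy Poincaré duality, one can factor the duality isomorphism of $E$ through the fiberwise duality of $F$ to obtain the required isomorphism for $B$. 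Klein's approach via parametrized spectra recasts this as an equivalence between fiberwise duality data and base duality data, making the extraction more transparent.

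The principal obstacle is bookkeeping the local coefficient systems: one must verify that the orientation character $w_E$ built from the triple $(w_F, w_B, \pi_1(B) \to \mathrm{Aut}(H_f(F)))$ agrees with the intrinsic orientation character of $E$, and that the spectral sequence comparisons respect these twistings. A related subtlety is ensuring that the fiberwise fundamental class is globally well-defined on $B$, which requires the orientation data of the fibration to transform compatibly under the structure group; this is precisely why the duality dimensions of $F$ and $B$ must add to $n$, rather than merely being bounded by it.
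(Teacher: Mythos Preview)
The paper does not prove this theorem at all: it is quoted as a known result, with the proof attributed to Gottlieb \cite{Go79} and Klein \cite{Kl01} (after an announcement by Quinn). So there is no ``paper's own proof'' to compare against beyond those citations.

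As for your sketch itself, the forward direction is essentially the standard spectral-sequence comparison argument and is broadly correct, modulo the bookkeeping you already flag. The reverse direction, however, has a genuine gap. Your step ``restrict $E$ to a small disk neighborhood in $B$ and use that this restriction is Poincar\'e-equivalent to $F \times D^b$'' presupposes that $B$ is locally Euclidean, but $B$ is only a finitely dominated CW-complex---there is no disk to restrict to, and no dimension $b$ is yet available. This is \emph{not} Gottlieb's approach: his argument uses the Becker--Gottlieb transfer (trace) to show directly that a Poincar\'e duality structure on $E$ forces one on $F$ and on $B$, without any local trivialization over disks. Likewise, Klein's argument proceeds via the dualizing spectrum of the loop group and Spanier--Whitehead/Costenoble--Waner style parametrized duality, again avoiding any manifold-like hypothesis on $B$. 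Your claim that the Serre spectral sequence ``immediately forces'' $f+b=n$ is also too quick: bounding the cohomological dimensions of $F$ and $B$ separately from that of $E$ is already nontrivial and is part of what the transfer or dualizing-spectrum machinery accomplishes.

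In short: since the paper only cites the result, your proposal is not comparable to any argument in the paper; and as a standalone proof the reverse direction needs to be rewritten to follow one of the actual approaches in the cited references rather than the disk-restriction idea, which does not apply in this generality.
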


\begin{cor}\label{weak-thma}
Let $M^n$ be a closed manifold such  that
$\pi_1(M)$ is the fundamental group of a finite aspherical CW-complex $B$.
\begin{enumerate}
\item  If $B$ is not a Poincar\' e duality space, then $\pi_{i}(M)$ is not finitely generated for some $i \ge 2$;
\item If $B$ is a Poincar\'e duality space of dimension $d$ with $ d \geq  n - 1$, then either $\pi_i(M)$ is not finitely generated for some $i\geq 2$, or $M$ is homotopy equivalent to $B$.
\end{enumerate}
\end{cor}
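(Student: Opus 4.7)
The strategy is to realize a fibration of the form $\widetilde{M} \to M \to B$ and then apply Theorem~\ref{pd-comlex-fib}. Since $B$ is a $K(\pi_1(M),1)$, the classifying map $M \to B$ exists and is unique up to homotopy; replacing it by a fibration, its homotopy fiber is computed via the long exact sequence: the map induces an isomorphism on $\pi_1$ and $\pi_i(B) = 0$ for $i \ge 2$, so the fiber is simply connected with the same higher homotopy groups as $M$, i.e.~a model for $\widetilde{M}$.

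Assume toward proving the dichotomy that $\pi_i(M)$ is finitely generated for all $i \ge 2$. Then $\pi_i(\widetilde{M})$ is finitely generated for all $i$, and $\widetilde{M}$ is a simply connected, finite dimensional CW-complex, so Lemma~\ref{fd-fg-htp} makes it finitely dominated. Since $M$ is a closed manifold and $B$ is given to be a finite CW-complex, all three spaces in the fibration are finitely dominated, and $M$ is a Poincar\'e duality space of dimension $n$. Theorem~\ref{pd-comlex-fib} now forces both $B$ and $\widetilde{M}$ to be Poincar\'e duality spaces, with duality dimensions summing to $n$.

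For~(a) this immediately contradicts the hypothesis that $B$ is not a PD space, so some $\pi_i(M)$ with $i \ge 2$ must fail to be finitely generated. For~(b), writing $k$ for the PD-dimension of $\widetilde{M}$, the equation $k + d = n$ combined with $d \ge n-1$ forces $k \le 1$. Negative $k$ is absurd, and $k = 1$ is ruled out because a simply connected PD-$1$ space would satisfy $H_1 \cong H^0 \cong \BZ$, contradicting simple connectivity via the Hurewicz theorem. The remaining case $k = 0$ means $\widetilde{M}$ has trivial reduced (co)homology, so by Hurewicz and Whitehead it is contractible; hence $M$ is aspherical with $\pi_1(M) = \pi_1(B)$, whence $M \simeq K(\pi_1(M),1) \simeq B$.

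The only subtle point is checking that the homotopy fiber of the classifying map really is (a model for) $\widetilde{M}$---which is immediate from the long exact sequence of homotopy groups once one uses asphericity of $B$---and confirming that $\widetilde{M}$ is finite dimensional so that Lemma~\ref{fd-fg-htp} applies (it inherits the finite dimension from $M$ as a covering space). The rest is a clean dimension count using Theorem~\ref{pd-comlex-fib}.
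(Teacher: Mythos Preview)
Your proof is correct and follows essentially the same approach as the paper: both set up the fibration $\widetilde{M}\to M\to B$ via the classifying map, invoke Lemma~\ref{fd-fg-htp} to make $\widetilde{M}$ finitely dominated under the finite-generation hypothesis, and then apply Theorem~\ref{pd-comlex-fib} to force $B$ and $\widetilde{M}$ to be Poincar\'e duality spaces with complementary dimensions, ruling out the cases $k=1$ and $k<0$ exactly as you do. Your write-up is slightly more explicit about why the homotopy fiber is $\widetilde{M}$ and why Lemma~\ref{fd-fg-htp} applies, but the argument is the same.
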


\proof

Since $\pi_1(M) \cong \pi_1(B)$, we have a map $f: M\rightarrow B$ which induces isomorphism on $\pi_1$. Let $F_f$ be its homotopy fiber, then $F_f$ is homotopy equivalent to the universal cover $\widetilde M$ of $M$. Assume now $\pi_i(M)$ is finitely generated for all $i \ge 2$, by Lemma \ref{fd-fg-htp}, $\widetilde{M}$ is homotopy equivalent to a finite CW-complex. Now by Theorem \ref{pd-comlex-fib}, we have $F_f$ and $B$ are Poincar\'e duality spaces and the duality dimension of $F_f$ is $n-d$. Note that by assumption $n-d\leq 1$. If $n-d=0$, then $F_f$ is homotopy equivalent to a point since it is simply connected. Therefore, $M$ and $B$ are homotopy equivalent. When $n-d =1$, then $H_1(F_f,\BZ) \cong \BZ$ since it is a Poincar\'e duality space, which is contradicting to the fact that $F_f$ is simply connected.
\qed

Note that  now \ref{mthm-pdg-mfd} follows from  Corollary \ref{weak-thma} and Theorem \ref{thm-htp-fin}, as duality groups of dimension $d \ge n-1$ satisfy the condition that $H^i(G,\BZ G)$ is finitely generated for $i \le [n/2]$. The only exceptional case is when $n=2$. But in this case the theorem follows directly from the classification of surfaces.

We now proceed to prove  \ref{mthm-gel-mfd}.

\begin{proof}[Proof of  part (a) of \ref{mthm-gel-mfd} ]
Suppose $\pi_i(M)$ is finitely generated for any $i$, by Serre's theorem, $H_i(\widetilde{M})$ is finitely generated for any $i$.
We have a Universal Coefficient Spectral Sequence, which converges to $H^{i}(M,\BZ G)$,  with $E_{2}$-terms
$$E_2^{p,q} = \Ext_{\BZ G}^{q}(H_p(\widetilde{M}),\BZ G) $$
Since $H^{i}(G,\BZ G) = 0$ for any $i\leq d$, we have $\Ext_{\BZ G}^{q}(H_p(\widetilde{M}),\BZ G) = 0$ for any $q\leq d$ by Proposition \ref{ext-vanish}. Hence $H^i(M,\BZ G) = 0$ for any $i\leq d$.

If $n =d$, then $ H_i(\widetilde{M},\BZ) \cong  H_i(M,\BZ G) \cong H^{n-i}(G, \BZ G) = 0$ for any $i$. But this is a contradiction since $ H_0(\widetilde{M},\BZ) \cong \BZ$.

Now we assume $n=d+1$. Then  $H_i(\widetilde{M},\BZ) \cong H_i(M,\BZ G) \cong  H^{d-i}(M,\BZ G) = 0$ for any $i \geq 1$. This implies $M$ is aspherical, in particular, $G$ is a Poincar\'e dualtiy group.  The same argument works for $n = d+2$ as we know already  $\widetilde{M}$ is simply connected.
\end{proof}

\begin{rem}
Note that for a finite group $G$, we have $H^0(G,\BZ G) \cong \BZ$ \cite[Proposition 13.2.11]{Ge08} and $H^i(G,\BZ G) =0$ for any $i>0$ \cite[Proposition 13.3.1]{Ge08}. On the other hand, when $G$ is not finite, $H^0(G,\BZ G) = 0$ \cite[Proposition 13.2.11]{Ge08}.
\end{rem}

\begin{rem}
Part (a) of \ref{mthm-pdg-mfd} now also follows from part (a)  of \ref{mthm-gel-mfd}  and Theorem \ref{thm-htp-fin} which are independent of  Theorem \ref{pd-comlex-fib}.
\end{rem}

Part (b) of \ref{mthm-gel-mfd}  is a special case of the following theorem using Lemma \ref{fd-fg-htp}.

\begin{thm} \label{fd-g-homol}
Let $X$ be an $n$-dimensional finite Poincar\'e complex. Let $N$ be a normal subgroup of $\pi_1(X)$ with quotient $G$, and $X_N$ be the corresponding cover. If $X_N$ is finitely dominated, then $H^i(G,\BZ G)$ is a finitely generated abelian group for all $i$.
\end{thm}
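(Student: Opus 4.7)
My plan is to combine Poincar\'e duality on $X$ with coefficients in $\BZ G$ together with the universal coefficient spectral sequence used in Theorem \ref{thm-htp-fin}, and bootstrap to the finite generation of $H^\ast(G,\BZ G)$ via Proposition \ref{fg-ext-cal}.

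First I would reduce to the orientable case by passing to the orientation double cover $X'\to X$, which is again a finite Poincar\'e complex of dimension $n$; setting $N'=N\cap\pi_1(X')$ and $G'=\pi_1(X')/N'$, one has $[G:G']\leq 2$, and $X'_{N'}$ is a finite cover of $X_N$, hence finitely dominated. By Lemma \ref{fi-hmg-fc} this reduces the problem to $G'$, so I may assume $X$ orientable. Viewing $\BZ G$ as a $\BZ\pi_1(X)$-module through $\pi_1(X)\twoheadrightarrow G$, Poincar\'e duality on $X$ then identifies
$$H^i(X,\BZ G)\;\cong\;H_{n-i}(X,\BZ G)\;=\;H_{n-i}\bigl(\BZ G\otimes_{\BZ\pi_1(X)}C_\ast(\widetilde X)\bigr)\;=\;H_{n-i}(X_N,\BZ),$$
because $C_\ast(X_N)$ arises from $C_\ast(\widetilde X)$ by quotienting the $N$-action on cells. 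Since $X_N$ is finitely dominated, Lemma \ref{fd-fg-htp} makes $H_{n-i}(X_N,\BZ)$ finitely generated as an abelian group, so $H^i(X,\BZ G)$ is finitely generated for every $i$.

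I then plug this into the universal coefficient spectral sequence
$$E_2^{p,q}=\Ext^q_{\BZ G}(H_p(X_N),\BZ G)\;\Longrightarrow\;H^{p+q}(X,\BZ G),$$
whose $p=0$ column is exactly $E_2^{0,q}=H^q(G,\BZ G)$ since $X_N$ is connected, and run an induction on $q$. The cases $q=0,1$ are immediate because no higher differentials reach $E_r^{0,q}$ at these low degrees, so $E_\infty^{0,q}=E_2^{0,q}$ is a subquotient of the finitely generated abutment $H^q(X,\BZ G)$. For the inductive step, assume $H^j(G,\BZ G)$ is finitely generated for all $j<q$. Each $H_p(X_N)$ is a $\BZ G$-module whose underlying abelian group is finitely generated, so Proposition \ref{fg-ext-cal} ensures every $E_2^{p',q'}$ with $q'<q$ is finitely generated. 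All differentials touching $E_r^{0,q}$ connect it to positions whose second coordinate is strictly less than $q$, and only finitely many of them are nonzero; thus the discrepancy between $E_2^{0,q}$ and $E_\infty^{0,q}$ is a successive extension of finitely generated abelian groups, while $E_\infty^{0,q}$ is itself a subquotient of the finitely generated $H^q(X,\BZ G)$. Hence $H^q(G,\BZ G)=E_2^{0,q}$ is finitely generated, completing the induction.

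The main delicacy I expect is the spectral sequence bookkeeping at the $p=0$ column: one must carefully check that every differential in or out of $E_r^{0,q}$ lands at a position whose second coordinate is strictly less than $q$, so that the inductive hypothesis genuinely supplies the input needed for Proposition \ref{fg-ext-cal}. Modulo this, the theorem is a clean fusion of Poincar\'e duality on the finite Poincar\'e complex $X$ with the algebraic finite-generation machinery from Section \ref{section:tech-thm}.
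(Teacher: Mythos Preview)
Your proof is correct, and the overall architecture—identify $H^{*}(X,\BZ G)\cong H_{n-*}(X_{N})$ via Poincar\'e duality, feed this into a spectral sequence converging to $H^{*}(X,\BZ G)$, and bootstrap by induction—matches the paper. The genuine difference is \emph{which} spectral sequence you run. The paper uses the Leray--Serre spectral sequence of the fibration $X_{N}\to X\to BG$, with $E_{2}^{p,q}=H^{p}(G,H^{q}(X_{N};\BZ G))$, so that $H^{k}(G,\BZ G)$ sits on the bottom row at $E_{2}^{k,0}$; the induction then has to control the sources $E_{2}^{p,q}$ on the antidiagonal $p+q=k-1$, and the paper breaks up $H^{q}(X_{N};\BZ G)$ via the ordinary universal coefficient theorem before invoking the induction hypothesis. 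You instead use the Universal Coefficient Spectral Sequence with $E_{2}^{p,q}=\Ext^{q}_{\BZ G}(H_{p}(X_{N}),\BZ G)$, placing $H^{q}(G,\BZ G)$ in the left column $E_{2}^{0,q}$; the differentials hitting that column come from $E_{r}^{r-1,q-r}$ and Proposition~\ref{fg-ext-cal} applies to these $E_{2}$-terms directly. Your route is a bit more economical, since it reuses the same spectral sequence as Theorem~\ref{thm-htp-fin} and avoids the auxiliary UCT step; the paper's route stays closer to the geometric picture of the fibration.

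Two small remarks. First, your appeal to Lemma~\ref{fd-fg-htp} for the finite generation of $H_{*}(X_{N})$ is slightly off, since that lemma assumes simple connectivity and $X_{N}$ has fundamental group $N$; of course a finitely dominated space has finitely generated integral homology for the trivial reason that it is a retract of a finite complex, so no harm is done. Second, your spectral-sequence bookkeeping is in fact cleaner than you fear: with the UCSS indexed as you wrote it, the differentials are $d_{r}\colon E_{r}^{p,q}\to E_{r}^{p-r+1,q+r}$, so nothing leaves the column $p=0$ at all, while incoming differentials originate at $(r-1,q-r)$ with $q-r\le q-2$; hence $E_{\infty}^{0,q}$ is a quotient of $E_{2}^{0,q}$ and your induction hypothesis covers exactly the Ext-degrees you need for Proposition~\ref{fg-ext-cal}. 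In particular your claim about $q=0,1$ is literally true.
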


\begin{proof}
If $G$ is a finite group, the theorem automatically holds. So we assume from now on $G$ is an infinite group, in particular $H_n(X_N,\BZ) = 0$.

Apply the Leray-Serre spectral sequence to the fibration $ X_N \to X \to BG$, we have $E_2^{p,q}=H^p(G,H^q(X_N;\BZ G))$ and the spectral sequence coverges to the graded groups of a filtration of $H^i(X;\BZ G)$.  By Poincar\'e  duality and \cite[Corollary 13.2.3]{Ge08} $H^i(X;\BZ G) \cong H_{n-i}(X, \BZ G) \cong H_{n-i}(X_N)$ which is a finitely generated abelian group since $X_N$ is finitely dominated. Note also that  $H^p(G,H^0(X_N;\BZ G)) \cong H^p(G,\BZ G)$.

For $i=0$, we have $0=H_n(X_N,\BZ)  = E_{\infty}^{0,0}=E_{2}^{0,0}=H^0(G,H^0(X_N;\BZ G)) \cong H^0(G, \BZ G)$.

For $i=1$, we have $E_{\infty}^{1,0} = E_{2}^{1,0} = H^1(G,H^0(X_N;\BZ G)) \cong H^1(G, \BZ G)$, $E_{\infty}^{1,0}$ is a subgroup of $ H_{n-1}(X_N)$ hence finitely generated.

We proceed by induction. Assume that $H^i(G, \BZ G)$ is finitely generated for $i < k$. Now $E_{\infty}^{k,0}$ is a subgroup of $H_{n-k}(X_N)$, hence finitely generated. Note that $E_{\infty}^{k,0}$ is a quotient of $E_2^{k,0}=H^k(G, H^0(X_N; \BZ G))=H^k(G, \BZ G)$. The differentials ending at the position $(k,0)$ come from the line $p+q=k-1$. Hence we only need to show that all the $E_2$-terms $E_2^{p,q}=H^p(G, H^q(X_N;\BZ G))$ are finitely generated for $p<k-1$.

By the universal coefficient theorem there is a short exact sequence
$$0 \to \Ext^1(H_{q-1}(X_N), \BZ G) \to H^q(X_N, \BZ G) \to \Hom(H_q(X_N), \BZ G) \to 0$$
which is an exact sequence of $G$-modules by naturality. This induces a long exact sequence
$$\cdots \to H^p(G, \Ext^1(H_{q-1}(X_N), \BZ G)) \to H^p(G, H^q(X_N,\BZ G)) \to H^p(G, \Hom(H_q(X_N), \BZ G))  \cdots$$
We have $\Hom(H_q(X_N), \BZ G) \cong \Hom(H_q(X_N), \BZ) \otimes \BZ G \cong (\BZ G)^r$ for some integer $r$. Therefore by the inductive assumption $H^p(G, \Hom(H_q(X_N), \BZ G))$ is finitely generated for $p < k$ and any $q$. We only need to show $H^p(G, \Ext^1(H_{q-1}(X_N), \BZ G))$ is finitely generated for $p <k-1$.

Notice that $H_{q-1}(X_N)$ is a finitely generated abelian group, we have $\Ext^1(H_{q-1}(X_N), \BZ G) =  \Ext^1(A, \BZ G)  $, where $A$ is the torsion part of $H_{q-1}(X_N)$. Here $A$ as an abelian group is finite. Let $A_0$ be the  underlying abelian group of $A$. Then as $G$-modules $\Ext^1(A, \BZ G) \cong \Ext^1 (A, \BZ ) \otimes \BZ G \cong A \otimes \BZ G \cong A_0 \otimes \BZ G$. Let $0 \to \BZ^s \to \BZ^s \to A_0 \to 0$ be a free resolution of $A_0$ over $\BZ$, then from the short exact sequence of $G$-modules
$$0 \to (\BZ G)^s \to (\BZ G)^s \to A_0 \otimes \BZ G \to 0$$
and the assumption that $H^p(G, \BZ G)$ is finitely generated for $p <k$, it's easy to see that $H^p(G, A_0 \otimes \BZ G)$ is finitely generated for $p < k-1$. Therefore $H^p(G, \Ext^1(H_{q-1}(X_N), \BZ G))$ is finitely generated for $p <k-1$ by Proposition \ref{fg-ext-cal}.  This finishes the proof.
\end{proof}

\section{The finite CW-complex case}
In this section, we prove our main results for finite CW-complexes. We also discuss the problem of constructing manifolds with finitely dominated universal cover.

We first generalize \cite[Theorem 1.2]{Da09} from polycylic groups to any virtually Poincar\'e duality groups.

\textbf{Proof of \ref{mthm-pdg-cw}.}
With the help of Theorem \ref{pd-comlex-fib} and   \ref{thm-htp-fin}, the proof now follows similarly to the arguments in \cite[p.1797-1798]{Da09}. We will assume $\pi_1(M)$ is a Poincar\'e duality group. The general case follows easily from this. In fact, we will assume that $\pi_1(M)$ is an orientable Poincar\'e duality group (i.e. the $\BZ G$-module structure on $\BZ$ is trivial in Definition \ref{defn-pdg}) as we can always pass to an orientable index two subgroup.

Suppose $\pi_{i}(X)$ is finitely generated for all $i \ge 2$.  By simplicial approximation we may assume that $X$ is a finite simplicial complex. Embed $X$ in an Euclidean space $\BR^{2n+r+1}$ for some $r\geq 0$ which will be fixed later in the proof. Let $W$ be a regular neighbourhood of $X$ and denote by $M^{2n+r}$ the boundary of $W$.  Then it is a standard fact that $X$ is a deformation retract of $W$, and the inclusion map $M \to W$ is $(n+r)$-connected.  Therefore $\pi_i(M)$ is finitely generated for $i\leq n+r-1$. If $r\geq 2$, then $n+r-1 \geq [(2n+r)/2]$,  by  Theorem \ref{thm-htp-fin} $\pi_i(M)$ is finitely generated for all $i$. Therefore $\widetilde{M}$ is homotopy equivalent to a finite CW-complex.  Apply Theorem \ref{pd-comlex-fib} to the fibration sequence
$\widetilde{M} \longrightarrow M\longrightarrow B\pi_1(X)$,
where $B\pi_1(X)$ is a model for $K(\pi_1(X),1)$, we see that  $\widetilde{M}$ is homotopy equivalent to a Poincar\'e duality complex of dimension $2n+r-d$, in particular $H_{2n+r-d}(\widetilde{M}) \cong \BZ$.

Case (a): when $d>n$, we have $2n+r-d \le n+r-1$, hence
$ H_{2n+r-d}(\widetilde{M}) \cong H_{2n+r-d}(\widetilde{W}) \cong H_{2n+r-d}(\widetilde{X})$ by the Whitehead theorem. On the other hand, if we  choose $r$ to be bigger than $d-n$, we have $2n+r-d >n = \dim{X}$, so $ H_{2n+r-d}(\widetilde{X})$ vanishes. This leads to a contradiction.

Case (b):  when $d=n$ or $n-1$,  the Poincar\'e duality dimension of $\widetilde{M}$ is $n+r+1$ or $n+r$. Since the map $M \to W$ is $(n+r)$-connected and $X$ is an $n$-dimensional complex, $H_i(\widetilde{M},\BZ)$ vanishes for $i =n+1,n+2,\cdots, n+r-1$. By the universal coefficient theorem and  Poincar\'e duality it is easy to see that $H_i(\widetilde{M},\BZ)$ also vanishes for $i =n-d+1,\cdots, n-d+r-2$. Now $n-d+1=1$ or $2$,  and $\widetilde{M}$ is simply connected, hence for $r$ sufficiently large $H_{i}(\widetilde{M})=0$ for all $1 \le i\leq n$. But  $H_{i}(\widetilde{X}) \cong H_{i}(\widetilde M)$ for $i \le n$, this implies that $\widetilde X$ is contractible. Therefore $X$  is a $K(\pi_1(X),1)$ space.
\qed

The first part of \ref{thm-existence} is a special case of the following theorem.

\begin{thm}\label{thm-relz}
Let $X$ be a $n$-dimensional finite CW-complex such that $\pi_1(X)=G$ and $\pi_i(X)$ is  finitely generated for all $i$. If $H^i(G,\BZ G)$ is finitely generated for all $i\leq n$, then we can find a closed manifold $M$ of dimension $2n+1$ such that $\pi_1(M)=G$ and $\pi_i(M)$ is finitely generated for any $i$.
\end{thm}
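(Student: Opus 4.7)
The plan is to realize $M$ as the boundary of a regular neighborhood of $X$ in a Euclidean space of carefully chosen dimension, and then invoke Theorem \ref{thm-htp-fin} to promote finite generation of the low-dimensional homotopy groups of $M$ to finite generation of all of them.

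First, by simplicial approximation I may assume that $X$ is a finite $n$-dimensional simplicial complex, and PL-embed it into $\BR^{2n+2}$. Let $W$ be a regular neighborhood of $X$; then $W$ is a compact $(2n+2)$-dimensional manifold with boundary which deformation retracts onto $X$, and $M:=\partial W$ is a closed manifold of dimension $2n+1$ with $\pi_1(M)\cong \pi_1(W)\cong G$. The key geometric input, already used in the proof of \ref{mthm-pdg-cw} above, is that when an $n$-complex sits inside a $(2n+2)$-manifold as a spine, general position makes the boundary inclusion $M\hookrightarrow W$ exactly $(n+1)$-connected.

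From the $(n+1)$-connectivity I conclude that $\pi_i(M)\cong \pi_i(W)\cong \pi_i(X)$ for $2\le i\le n$, so by the hypothesis on $X$ these groups are finitely generated. Now I apply Theorem \ref{thm-htp-fin} to the $(2n+1)$-dimensional manifold $M$: its relevant range is $2\le i\le [(2n+1)/2]=n$, in which the homotopy groups of $M$ have just been shown to be finitely generated; and the assumption that $H^i(G,\BZ G)$ is finitely generated for $i\le n$ is precisely the cohomological input the theorem demands. The desired conclusion, that $\pi_i(M)$ is finitely generated for all $i$, then follows immediately.

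There is no essential obstacle in this argument; the dimension of the embedding has been tuned precisely so that the cut-off $[\dim M/2]=n$ simultaneously matches the range in which $H^i(G,\BZ G)$ is controlled by hypothesis and the range in which $M\hookrightarrow W$ induces isomorphisms on homotopy groups. In this sense, Theorem \ref{thm-relz} is a direct geometric payoff of Theorem \ref{thm-htp-fin}.
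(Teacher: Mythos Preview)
Your proof is correct and follows essentially the same approach as the paper: embed $X$ in $\BR^{2n+2}$, take the boundary $M^{2n+1}$ of a regular neighborhood, use the $(n+1)$-connectivity of $M\hookrightarrow W$ to get finite generation of $\pi_i(M)$ for $i\le n$, and then apply Theorem~\ref{thm-htp-fin}. The only cosmetic difference is that the paper dismisses small $n$ by stipulating $n\ge 2$ at the outset, whereas you (correctly) do not need this.
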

\Proof We can assume $n \geq 2$. Embed $X$ into an Euclidean space $\BR^{2n+2}$. Let $W$ be a regular neighbourhood of $X$ and denote by $M^{2n+1}$ the boundary of $W$. Similar to the proof of \ref{mthm-pdg-cw}, $W$ is a deformation retract of $X$ and the inclusion map $M \to W$ is $(n+1)$-connected, therefore $\pi_i(M)$ is finitely generated for any $2 \le i\leq n$. Note that $n \geq [\frac{2n+1}{2}]$, by Theorem \ref{thm-htp-fin}, we have $\pi_i(M)$ is finitely generated for all $i \ge 2$.
\qed

\begin{rem}
Note that Theorem \ref{thm-relz} implies that, to answer \ref{ques}, it suffices to find a finite CW-complex $X$ with fundamental group $G$ such that $\pi_i(X)$ finitely generated for all $i$.
\end{rem}

\begin{cor}
Let $X$ be a finite CW-complex with fundamental group the Thompson group $F$, then for some $i\geq 2$, $\pi_i(X)$ is not finitely generated.
\end{cor}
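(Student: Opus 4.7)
The plan is to argue by contradiction, pairing the realization result \ref{thm-relz} against the non-existence statement in Example \ref{ex}. Specifically, suppose that $X$ is a finite CW-complex of some dimension $n$ with $\pi_1(X)=F$ and that $\pi_i(X)$ is finitely generated for every $i\geq 2$. Since $F$ satisfies $H^i(F,\BZ F)=0$ for all $i$ by \cite[Theorem 7.2]{BG84}, the cohomology groups $H^i(F,\BZ F)$ are in particular finitely generated for all $i\leq n$, so the hypotheses of Theorem \ref{thm-relz} are met.

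Applying Theorem \ref{thm-relz} to $X$, we obtain a closed manifold $M$ of dimension $2n+1$ with $\pi_1(M)=F$ and $\pi_i(M)$ finitely generated for every $i\geq 2$. This directly contradicts Example \ref{ex}, which (via part (a) of \ref{mthm-gel-mfd} applied with $d$ any integer $\geq 2n+1$) asserts that no closed manifold with fundamental group $F$ can have all its higher homotopy groups finitely generated. Hence the initial assumption must fail and there exists some $i\geq 2$ with $\pi_i(X)$ not finitely generated.

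There is essentially no obstacle beyond verifying that the hypotheses of Theorem \ref{thm-relz} truly apply in this setting; the main content has already been packaged into \ref{thm-relz} and \ref{mthm-gel-mfd}(a). The only point worth a brief check is the degenerate case $n\leq 1$, which is trivial since any finite $1$-dimensional CW-complex has free fundamental group and $F$ is not free. Thus the corollary follows immediately by combining these two previously established results.
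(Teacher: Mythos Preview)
Your proof is correct and follows essentially the same route as the paper: assume all $\pi_i(X)$ are finitely generated, apply Theorem~\ref{thm-relz} (using $H^i(F,\BZ F)=0$) to produce a closed manifold $M$ with $\pi_1(M)=F$ and all higher homotopy groups finitely generated, and then derive a contradiction from part (a) of \ref{mthm-gel-mfd}. The paper phrases the contradiction as ``$M$ is aspherical and $F$ is a Poincar\'e duality group'' rather than citing Example~\ref{ex}, and it omits your remark on the case $n\le 1$, but the argument is the same.
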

\Proof Suppose $\pi_i(X)$ is finitely generated for all $i$. Since $H^i(F,\BZ F)=0$ for any $i$ \cite[Theorem 7.2]{BG84},  by Theorem \ref{thm-relz}, we have a manifold $M$ with fundamental group $F$ such that $\pi_i(M)$ is finitely generated for all $i$. Now by \ref{mthm-gel-mfd} (a), we have $M$ is aspherical and  $F$ is a Poincar\'e duality group which is a contradiction.
\qed

\begin{proof}[Proof of  \ref{mthm-gel-cw} ]
Suppose $\pi_i(X)$ is finitely generated for any $i \ge 2$, Theorem \ref{thm-relz} implies there is a closed manifold $M^{2n+1}$ with fundamental group $G$ such that $\pi_i(M)$ is finitely generated for all $i \ge 2$. But then part (b) of \ref{mthm-gel-mfd} says $H^i(G,\BZ G)$ must be finitely generated for all $i$.
\end{proof}

\begin{proof}[Proof of  \ref{thm-existence} (b)]
By part (a), we can find a  closed manifold $M$ such that $\pi_i(M)$ is finitely generated for all $i$. Hence the universal cover $\widetilde{M}$ of $M$ is homotopy equivalent to a finite CW-complex. Now consider the following fibration sequence
$$\widetilde{M} \longrightarrow M\longrightarrow B\pi_1(M)$$
Where $B\pi_1(M)$ is a model for $K(\pi_1(M),1)$. By Theorem \ref{pd-comlex-fib}, we have $B\pi_1(M)$ is a Poincar\'e duality space. Hence $G$ is a Poincar\'e duality group.
\end{proof}

\begin{rem}\label{rmk: farrell}
This recovers partially a result of Farrell \cite[Theorem 3]{Fa75}. Recall Farrell's theorem says the following: let $G$ be a group of type F and $n$ be the smallest integer such that $H^n(G,\BZ G) \neq 0$, if $H^n(G, \BZ G)$ is a finitely generated abelian group, then $G$ is Poincar\'e duality group.
\end{rem}

\bibliographystyle{amsplain}

\end{document}